\documentclass[11pt]{amsart}
%
%

\usepackage{amsmath,amsfonts,amsthm,amstext,amscd,amssymb,mathtools}

\usepackage{hyperref}
\usepackage{enumerate}
\usepackage{stmaryrd}

\def\cal#1{\mathcal{#1}}
\def\bb#1{\mathbb{#1}}

\def\lr#1{\left\langle #1\right\rangle}
\usepackage[all]{xy}

\def\co{\colon\thinspace}

\newcommand{\barr}{\begin{array}}
	\newcommand{\earr}{\end{array}}
\newcommand{\beqq}{\begin{equation}}
\newcommand{\eeqq}{\end{equation}}
\newcommand{\beao}{\begin{eqnarray*}}
	\newcommand{\eeao}{\end{eqnarray*}\noindent}
\newcommand{\beam}{\begin{eqnarray}}
\newcommand{\eeam}{\end{eqnarray}\noindent}
\newcommand{\bdis}{\begin{displaymath}}
\newcommand{\edis}{\end{displaymath}\noindent}

\newtheorem{theorem}{Theorem}[section]
\newtheorem*{theoremA}{Theorem}
\newtheorem{lemma}[theorem]{Lemma}

\newtheorem{prop}[theorem]{Proposition}
\newtheorem{claim}[theorem]{Claim}

\theoremstyle{definition}

\newtheorem{example}[theorem]{Example}

\theoremstyle{remark}
\newtheorem{rem}[theorem]{Remark}

\begin{document}

\title[Foliations on Euclidean Space]
 {Metric Foliations on the Euclidean Space}

\author[Speran\c{c}a]{Llohann D. Speran\c ca}
\author[Weil]{Steffen Weil}

\subjclass{Primary  53C20}

\keywords{Riemannian foliations, Euclidean space}


\begin{abstract}We complete a minor gap in Gromoll and Walschap classification of metric fibrations from the Euclidean space, thus completing the classification of Riemannian foliations on Euclidean spaces.
\end{abstract}

\maketitle

\section{Introduction}

A (non-singular) \textit{Riemannian foliation} is a foliation whose leaves are locally equidistant. A \textit{Riemannian submersion} is a submersion whose fibers are locally equidistant.
Metric foliations and submersions on specific Riemannian manifolds have been studied and classified.
For instance, Lytchak--Wilking \cite{lytchak2016riemannian} complete the classification of Riemannian foliations of the Euclidean sphere;  Gromoll--Walshap \cite{gromoll2001} propose a classification of Riemannian submersions of the Euclidean space  and Florit--Goertsches--Lytchak--T\"oben \cite{florit2015Riemannian} prove that  any Riemannian foliation $\cal F$ of the Euclidean space $\bb R^{n+k}$ is defined by a submersion $\pi\co \bb R^{n+k}\to M^n$ whose fibers coincide with the leaves of $\cal F$.

However, two gaps in \cite{gromoll2001} were pointed out in \cite{weil}, thus reopening the question of the classification of Riemannian submersions/foliations on the Euclidean space. 
More specifically, \cite{florit2015Riemannian} questions:

\vspace{0.2cm}
\noindent \textit{Question 1.10.} Is any Riemannian foliation on the Euclidean space homogeneous?
\vspace{0.2cm}

The purpose of this note is to complete the gaps in \cite{gromoll2001}, answering the question above affirmatively:

\begin{theoremA}
Every Riemannian foliation with connected fibers on the Euclidean space is homogeneous.
\end{theoremA}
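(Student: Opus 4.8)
The plan is to reduce to a metric fibration and then complete the Gromoll--Walschap program exactly at the two points flagged in \cite{weil}. By \cite{florit2015Riemannian}, $\cal F$ is the fiber decomposition of a Riemannian submersion $\pi\co\bb R^{n+k}\to M^n$ with leaves equal to fibers; since $\bb R^{n+k}$ is complete, $\pi$ is a smooth fiber bundle with connected, complete fibers, and it suffices to produce a connected subgroup $G\le\Iso(\bb R^{n+k})$ whose orbits are exactly these fibers. The key rigidity is flatness of the total space: every Killing field of $\bb R^{n+k}$ is affine, $X(x)=Bx+c$ with $B$ skew-symmetric, and the holonomy Jacobi fields of $\pi$ along horizontal geodesics are affine in arclength. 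Gromoll--Walschap use this to reconstruct $\cal F$, up to congruence, from the O'Neill tensors $A$ and $T$ of $\pi$ at a single point, producing a candidate isometric action whose orbits should be the leaves.

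\textbf{Splitting off the Euclidean part.} I would next invoke Wilking's dual foliation $\cal F^{\#}$ to peel off a maximal totally geodesic direction. Its leaves turn out to be mutually parallel affine subspaces, giving an isometric splitting $\bb R^{n+k}\cong\bb R^{q}\times\bb R^{m}$ preserved by $\cal F$: on $\bb R^q$ the leaves of $\cal F$ are complete and totally geodesic, hence parallel affine subspaces (non-parallel equidimensional affine subspaces cannot stay equidistant), so $\cal F|_{\bb R^q}$ is a linear projection; on $\bb R^m$ the restriction is a ``twisted'' metric fibration, with transitive dual foliation and no totally geodesic leaves. The two logical gaps of \cite{gromoll2001} identified in \cite{weil} sit precisely here: (a) that the dual leaves are closed, parallel, and affine, and that \emph{both} factors of the splitting are $\cal F$-invariant; and (b) the analysis of the twisted factor, especially the ``mixed'' leaves in which a Euclidean sub-factor of the fiber coexists with a genuinely twisted part. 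Supplying careful proofs of (a) and (b) -- through completeness and closedness of the candidate leaves together with a systematic analysis of $A$ and $T$ -- is the content of this note.

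\textbf{Exhibiting the group.} On $\bb R^q$, $\cal F$ is homogeneous via the translations along the common direction of the leaves. On the twisted factor $\bb R^m$, fix a leaf $L\ni p$, write $\bb R^m=\lie v\oplus\lie h$ with $\lie v=T_pL$, and let $\lie g\subset\lie{iso}(\bb R^m)$ be spanned by the translations by vectors of $\lie v$ and by the affine Killing fields whose linear parts are the skew-symmetric operators $C_h$, $h\in\lie h$, read off from $A_p$. The identities coming from affineness of the holonomy Jacobi fields give that $\lie g$ is a Lie subalgebra with $\lie g(x)=T_xL_x$ for every $x$; hence the connected group $G$ it generates acts by isometries tangent to $\cal F$ and transitively on each leaf, so every $G$-orbit is open in its connected leaf and therefore equals it. Combining the two factors shows $\cal F$ is homogeneous.

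\textbf{Main obstacle.} The difficulty is concentrated in the second step: proving that the pointwise infinitesimal data integrates to the \emph{given} global foliation with complete leaves, and correctly handling the mixed case -- that is, closing the Gromoll--Walschap gap. This requires nothing conceptually new beyond completeness and closedness of leaves and patient bookkeeping with the O'Neill tensors; granting it, homogeneity follows as in the third step.
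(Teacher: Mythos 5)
Your proposal correctly identifies the reduction via Florit--Goertsches--Lytchak--T\"oben (a foliation is the fiber decomposition of a submersion $\pi$) and correctly locates the problem inside Gromoll--Walschap's argument. However, the core of your plan --- split off a maximal Euclidean factor using the dual foliation, then build a Lie algebra of Killing fields from $A$ and $T$ and prove it integrates to a group acting transitively on each leaf --- is precisely the part where the known gaps live, and your ``Main obstacle'' paragraph explicitly defers it to ``completeness and closedness of leaves and patient bookkeeping with the O'Neill tensors.'' That is not a proof; it is a restatement of what has to be shown. In particular, your description of the two gaps (a) and (b) is also imprecise: the first gap in \cite{gromoll2001} is the unjustified assertion that $\mathrm{im}(A_x)$ foliates the soul fiber $F$ by parallel affine subspaces (the Jacobi-field argument there only shows the variation field vanishes along a single geodesic, which does not force the variation to be by the same geodesic), and the second is the implicit substantiality assumption $\bigoplus_z \mathrm{im}(A_z)=\mathrm{im}(A_x)+\mathrm{im}(A_y)$ used to upgrade ``$\nabla_v(A_xy)=0$ for $v\in \mathrm{im}(A_x)+\mathrm{im}(A_y)$'' to full parallelism. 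Neither is addressed concretely in your sketch.

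The paper's route is genuinely different and much more economical. It sidesteps both gaps entirely: instead of establishing an affine splitting and then reconstructing a group, it works directly with the soul fiber $F$ (totally geodesic by Gromoll--Walschap's earlier work) and proves the single statement that $A_XY$ is parallel along $F$ for basic $X,Y$. The mechanism is to show $A_XY$ is the gradient of a function $f\co F\to \bb R$: one checks that the $1$-form $\alpha(u)=\lr{A_XY,u}$ is closed by a case analysis on $\cal I=\mathrm{im}(A_X)+\mathrm{im}(A_Y)$ versus $\cal I^\perp$, using O'Neill's equations and \cite[Lemma 2.4]{gromoll2001}. Since $\|A_XY\|$ is constant along $F$ by O'Neill's curvature identity, and constant-length gradients on affine spaces are parallel, $A_XY$ is parallel, and the homogeneity theorem then follows from \cite[Theorem 2.6]{gromoll1997metric}. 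If you want to pursue your own route, you would need, at minimum, a complete replacement for the failed argument that $\mathrm{im}(A_x)$ is parallel along $F$, and a justification (or an avoidance) of the substantiality assumption in the twisted factor; these are exactly the items you have left open.
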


In the next sections, we briefly discuss Gromoll--Walschap's proof and present a workaround for the gaps pointed out in \cite{weil}. The new argument happens to be quite elementary and starts just before the first gap, making it easy  to be put together for a complete proof.


%

\vspace{0.2cm}

\paragraph{Acknowledgments}
The authors thank A. Lytchak for his support. The first named author is supported by FAPESP grant number 2017/19657-0 and CNPq grant number 404266/2016-9. He also would like to thank the University of Cologne for the hospitality. 
Part of this work is part of the Masters Thesis of the second named author.

\section{Original proof and gap}\label{gap}

Gromoll--Walschap \cite{gromoll2001} stated the following theorem:

\begin{theorem}[\cite{gromoll2001}, page 234]
	\label{thm:GW}
	Let $\pi:\bb R^{n+k}\to M^n$ be a Riemannian submersion of the Euclidean space with connected fibers. Then
	\begin{enumerate}
		\item there is a fiber $F$ (over a soul of $M$) which is an affine subspace of the Euclidean space, that, up to congruence, may be taken to be $F=\bb R^k\times \{0\}$.
		\item there is a representation $\phi:\bb R^k\to {SO}(n)$ such that $\pi$ is the orbit fibration of the free isometric group action $\psi$ of $\bb R^k$ on
		$\bb R^{n+k}=\bb R^k\times \bb R^n$ given by
		\begin{equation*}
		\psi(v)(u,x)=(u+v, \phi(v)x),\quad u,v\in\bb R^k,\, x\in \bb R^n.
		\end{equation*}
	\end{enumerate}
\end{theorem}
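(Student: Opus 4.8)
\emph{Strategy and base geometry.} The plan is to follow the two-step pattern of the statement: first produce the affine fiber of part~(1), then use it to read off the isometric $\bb R^k$-action of part~(2). As input I will use O'Neill's formula, which gives that $M$ has nonnegative sectional curvature, and the homotopy exact sequence of the fibration $F\hookrightarrow\bb R^{n+k}\xrightarrow{\pi}M$: since $\bb R^{n+k}$ is contractible and $F$ connected, $M$ is simply connected and $\pi_i(F)\cong\pi_{i+1}(M)$. By the Cheeger--Gromoll soul theorem $M$ then carries a compact totally geodesic soul $S$, simply connected as a deformation retract of $M$, and $M$ is diffeomorphic to the total space of $\nu S$. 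I will also use repeatedly the identity $d(\,\cdot\,,F_s)=d_M(\,\cdot\,,s)\circ\pi$ for the fibers $F_s=\pi^{-1}(s)$: minimizing segments to a fiber are horizontal and project to segments of the same length, and minimizing segments of $M$ lift horizontally for all time because $\bb R^{n+k}$ is complete.

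\emph{The affine fiber.} Fix $s_0\in S$ and set $F_0=\pi^{-1}(s_0)$. The first step is to show $F_0$ is totally geodesic in $\bb R^{n+k}$, hence, being complete, an affine $k$-plane, which after an ambient isometry I normalize to $\bb R^k\times\{0\}$. Since the second fundamental form of $F_0$ is the restriction to $F_0$ of the submersion tensor $T$, this amounts to $T\equiv 0$ along $F_0$; I would derive it from the distance identity above, which transfers the rigidity of $d_M(\,\cdot\,,s_0)$ at the soul point into a statement about $F_0$, together with the flatness of the ambient metric — roughly, a fiber bending inside the flat $\bb R^{n+k}$ would, after projection, produce metric balls around $s_0$ too convex to be compatible with $s_0$ lying in the soul. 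Making this precise is the delicate point at which the original proof is incomplete and where the present note inserts a corrected, elementary argument.

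\emph{Reading off the action.} With $F_0=\bb R^k\times\{0\}$ the vertical space at $(w,0)$ is $\bb R^k\times\{0\}$, so $\mathcal H_{(w,0)}=\{0\}\times\bb R^n$ for every $w$, and $\bb R^{n+k}$ is the disjoint union of the horizontal affine planes $\{w\}\times\bb R^n$ (each point has a unique foot on $F_0$, and the perpendicular through it is horizontal). The distance identity gives $d_M(\pi(u,x),s_0)=|x|$, so $d_M(\,\cdot\,,s_0)$ is smooth away from $s_0$; hence $s_0$ is a pole, $\exp_{s_0}\co\bb R^n\to M$ is a diffeomorphism, and $S=\{s_0\}$. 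Lifting the geodesics of $M$ from $s_0$ shows that $\pi$ restricted to $\{w\}\times\bb R^n$ is $\exp_{s_0}$ precomposed with a linear isometry $J_w^{-1}$ of $\bb R^n$; normalizing $J_0=\id$ gives a continuous $w\mapsto J_w\in SO(n)$ with $\pi(w,x)=\exp_{s_0}(J_w^{-1}x)$. As $\exp_{s_0}$ is injective, the fiber through $(w,x)$ is $\{(w',J_{w'}J_w^{-1}x):w'\in\bb R^k\}$, and $\psi(v)(u,x):=(u+v,\,J_{u+v}J_u^{-1}x)$ always defines an $\bb R^k$-action preserving fibers. The theorem thus reduces to showing $J_{u+v}J_u^{-1}$ is independent of $u$; then $\phi(v):=J_v$ is a homomorphism $\bb R^k\to SO(n)$, $\psi$ acts by isometries in the stated form, and $\pi$ is its orbit map.

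\emph{Main obstacle.} The crux is this independence. Differentiating along $F_0$, the variation of $w\mapsto J_w$ is governed by the integrability tensor $A$ of $\pi$ via O'Neill's identities (using $T\equiv 0$ on $F_0$): $A_{(0,\xi)}(e,0)=(0,\omega_w(e)\xi)$ with $\omega_w(e)=\bigl(\partial_e J_w\bigr)J_w^{-1}\in\lie{so}(n)$, so the desired independence is equivalent to this connection form $\omega$ being parallel and abelian along $F_0$ — which has to be extracted from the vanishing of the ambient curvature through O'Neill's structure equations. I expect that making this, together with the global well-definedness of $\phi$ and of the action $\psi$ away from $F_0$, rigorous is precisely the gap being repaired; the remaining steps are routine bookkeeping with O'Neill's tensors, completeness, and soul theory.
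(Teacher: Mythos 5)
Your proposal outlines a plausible high-level strategy but has two substantive problems, one of attribution and one of content, and so does not constitute a proof.

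First, you misidentify where the gap lies. You suggest that the delicate step being repaired is the proof that the fiber $F$ over the soul is totally geodesic. That is not the case: the paper explicitly attributes item~(1) (totally geodesic fiber, hence affine) to \cite{gromoll1997metric}, and accepts it. The gaps, as explained in Section~\ref{gap}, sit entirely inside the proof of item~(2): specifically inside the argument of \cite{gromoll2001} that the integrability tensor $A_XY$ is parallel along $F$ (Step~1: that $im(A_x)$ defines an affine foliation of $F$; Step~3: that the ``fully twisted'' factor $\pi'$ has parallel integrability tensor). Your claim that the present note ``inserts a corrected, elementary argument'' for the totally geodesic fiber is simply wrong.

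Second, and more importantly, you do not actually prove the key step. Your construction of $J_w\in SO(n)$ with $\pi(w,x)=\exp_{s_0}(J_w^{-1}x)$ is a reasonable reformulation, and you correctly observe that the theorem reduces to showing $J_{u+v}J_u^{-1}$ is independent of $u$ (equivalently, that the connection form $\omega$ is parallel and takes values in an abelian subalgebra), and you correctly note that this should follow from flatness via O'Neill's equations. But you then write ``I expect that making this \dots rigorous is precisely the gap being repaired; the remaining steps are routine.'' That is exactly the content of the paper, and it is not routine: the whole difficulty in \cite{gromoll2001} was that $\nabla_v(A_XY)=0$ was only known for $v\in im(A_X)+im(A_Y)$, leaving the directions $\mathcal I^\perp$ uncontrolled. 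The paper closes this by showing (Lemma~\ref{lem1}) that the one-form $u\mapsto\lr{A_XY,u}$ on $F$ is closed, hence $A_XY=\nabla f$, and then using constancy of $\|A_XY\|$ (from O'Neill) to conclude a constant-length gradient on an affine space is parallel; Theorem~2.6 of \cite{gromoll1997metric} then gives the classification. Your proposal names the obstacle but leaves it entirely unaddressed, so as it stands it is a restatement of the problem rather than a proof. As a separate minor point, your assertion that $s_0$ being a pole forces the soul to be $\{s_0\}$ is not automatic from the distance formula alone and would need justification if you wanted to pursue this route.
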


As a first step, \cite{gromoll1997metric} proves that the fiber $\pi^{-1}(b)=F$ over the soul $\{b\}$ of $M$ is totally geodesic, 
concluding item $(1)$ in Theorem \ref{thm:GW}.
 
Recall that the \emph{integrability tensor} $A$ is the vertical restriction 
	$A_X Y = \nabla^v _X Y$
of the Levi-Civita connection $\nabla$, where $X$, $Y$ are horizontal vector fields on $\bb R^{n+k}$.
Moreover, a field is called \textit{basic} if it is both horizontal and projectable.

The aim of \cite{gromoll2001} is to prove Proposition \ref{prop:AXYparallel} below, thus concluding Theorem \ref{thm:GW} directly from Theorem 2.6 in \cite{gromoll1997metric} (see also the paragraph preceding Theorem 2.6 in \cite{gromoll1997metric}). 
After presenting two gaps in \cite{gromoll2001}, our goal is  to establish: 

\begin{prop}
\label{prop:AXYparallel}
For every basic  fields $X,Y$, $A_XY$ is parallel along $F$
\end{prop}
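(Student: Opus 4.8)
The plan is to work on the totally geodesic fiber $F=\bb R^k\times\{0\}$ and show that the function $p\mapsto A_XY(p)$, for $X,Y$ basic, is covariantly constant along $F$ with respect to the ambient flat connection (equivalently, along $F$ with its induced connection, since $F$ is totally geodesic). Since $F$ is an affine subspace, "parallel along $F$" just means the Euclidean vector field $A_XY|_F$ is constant, so it suffices to show $\nabla_V (A_XY)=0$ for every $V$ tangent to $F$, where $\nabla$ is the flat connection on $\bb R^{n+k}$. The natural tool is the O'Neill formulae together with the curvature equations for a Riemannian submersion, specialized to the flat total space and to a totally geodesic fiber.

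First I would record what vanishes along $F$: since $F$ is totally geodesic, the $A$-tensor and the second fundamental form interact so that, along $F$, the O'Neill tensor $T$ vanishes, and the horizontal space is parallel-transported within itself along $F$ (this is what totally geodesic gives us for the normal connection). Next I would differentiate $A_XY$ along a vertical $V\in TF$ and expand using $\nabla_V(A_XY)=\nabla_V(\nabla^v_XY)$. The key identities are O'Neill's: for basic $X,Y$ and vertical $V$, $(\nabla_V A)_XY$ is controlled by $A_X$, $A_Y$, $T_V$ and the curvature $R(V,X)Y$ of the total space. Because $\bb R^{n+k}$ is flat, $R\equiv 0$, which kills the curvature term; because $F$ is totally geodesic, the $T$-terms drop out along $F$. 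What remains should be an expression purely quadratic in $A$ evaluated on horizontal vectors — schematically $A_X A_Y V$-type terms — and the point is that these are horizontal-valued while $\nabla_V(A_XY)$ has a vertical part that one shows is zero by a separate O'Neill identity ($A_XY$ being vertical, its derivative splits, and the horizontal part of $\nabla_V(A_XY)$ is again governed by $A$ and the vanishing curvature).

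The main obstacle I expect is bookkeeping the horizontal/vertical splitting correctly and making sure every term that survives is forced to vanish by flatness rather than merely being "small." Concretely, after the dust settles one is left with something like $\nabla_V(A_XY) = (\nabla_V A)_X Y + A_{\nabla_V X}Y + A_X\nabla_V Y$; the latter two terms need the fact that $\nabla_V X$ and $\nabla_V Y$ are again (along $F$) horizontal with controlled vertical part — here the totally geodesic hypothesis and the structure of the specific fields $X,Y$ (basic, hence their brackets with vertical fields are vertical) do the work — and $(\nabla_V A)_XY$ is handled by O'Neill's derivative formula with $R=0$. I would also need the companion fact, presumably available from the earlier steps or an easy flat-space computation, that $A_XY$ restricted to $F$ actually \emph{is} tangent to $F$ pointwise (so that "parallel along $F$" is the right notion), which follows because the vertical distribution along $F$ is $TF$ itself. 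Assembling these, each surviving term is either a curvature term (zero by flatness) or a $T$-term (zero by totally geodesic), giving $\nabla_V(A_XY)=0$ and hence the claim.
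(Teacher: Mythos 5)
Your plan is to show directly that $\nabla_V(A_XY)=0$ for every $V$ tangent to $F$, arguing that flatness of $\bb R^{n+k}$ kills the curvature term in O'Neill's derivative formula while the totally geodesic property of $F$ kills the $S$- (or $T$-) terms. This does not work, and the failure is exactly where the real difficulty of the problem sits. O'Neill's formula (as the paper records in \eqref{eq:ONeill0}) reads, along $F$,
\[
(\nabla^v_w A)_XY = -A_Y A_X^* w - (\nabla_X^v S)_Y w ,
\]
and the second term involves the derivative of the shape tensor $S$ in the \emph{horizontal} direction $X$. The totally geodesic hypothesis only gives $S\equiv 0$ on $F$ itself; it gives you no control over $(\nabla_X^v S)_Y$, which differentiates $S$ transversally to $F$. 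Flatness has already been used (it is what makes the raw $R(V,X)Y$ contribution vanish and what guarantees $F$ is totally geodesic in the first place), so there is nothing left to kill this term. Consequently, after the Leibniz expansion one is left with
\[
\nabla_v(A_XY) = -(\nabla_X^v S)_Y v + (\nabla_Y^v S)_X v ,
\]
which is \emph{not} zero in general. Indeed, Lemma \ref{lem:GW} establishes the vanishing of $\nabla_v(A_XY)$ only for $v\in im(A_X)+im(A_Y)$; the entire content of Proposition \ref{prop:AXYparallel}, and the source of both gaps in Gromoll--Walschap, is what happens for $v$ in the orthogonal complement. Your argument would prove the full statement in one stroke, which should itself be a warning sign.

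The paper's route is genuinely different and worth internalizing: instead of trying to force $\nabla_V(A_XY)=0$ term by term, one shows via Lemma \ref{lem1} that the $1$-form $u\mapsto\lr{A_XY,u}$ on $F$ is closed, hence $A_XY=\nabla f$ for some $f\colon F\to\bb R$; combining this with the fact that $\|A_XY\|$ is constant (from O'Neill's curvature equation for the base $M$) and the elementary fact that a constant-length gradient field on an affine space is parallel, one concludes. The symmetry needed for closedness comes from the self-adjointness of $(\nabla_X^v S)_Y$ — precisely the term your plan was hoping would vanish. If you want to salvage a direct computational proof, you would need to supply that missing symmetry/structure input rather than appeal to flatness and total geodesy alone.
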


In the last paragraph, \cite{gromoll2001} show that $A_XY$ is parallel along $F$ for all basic  $X$, $Y$  if $A_xy$ is parallel along $F$ for all parallel horizontal $x$, $y$. 
The overall argument in \cite{gromoll2001} is then  to prove that $A_xy$ is parallel.

\begin{rem}
We remark that an argument similar to the one presented in section \ref{sec:3}  indeed shows that $A_xy$ is parallel, achieving Gromoll--Walschap's aim with a different approach. 
\end{rem}


Let $x,y$ be parallel horizontal fields along $F$.
In \cite[section 3]{gromoll2001}, a very interesting argument using the fiber volume form shows that  $\nabla_v(A_xy)=0$ for all $v\in im(A_x)+im(A_y)$. It follows that $im(A_x)$  defines an integrable distribution with totally geodesic leaves on $F$ (at least in the open and dense subset  where the rank of $im(A_x)$ is constant). 
The remainder of the proof deals with $\nabla_u(A_xy)$ for $u\in (im(A_x)+im(A_y))^\perp$ and can be divided in three steps:

\textbf{Step 1:} $im(A_x)$ defines a foliation  by affine subspaces on $F$
	
\textbf{Step 2:} $\pi$ is the composition of a linear projection $pr:\bb R^{n+k}\to \bb R^{n+k-l}$ followed by a Riemannian submersion $\pi:\bb R^{n+k-l}\to \bb R^n$, such that $\pi'$ is `fully twisted'. Specifically,  $TF'=pr(\bigoplus_{x\in \cal H} im(A_x))$, for $F'=pr(F)$, where $\cal{H}$ denotes the horizontal distribution along $F$

\textbf{Step 3:} The integrability tensor of $\pi'$ is parallel along $F'$

\vspace{0.2cm}

The gaps appear in Steps 1 and 3. In Step 1, a gap appears in arguing that $im(A_x)$ defines a Riemannian foliation on $F$. In Step 3, it seems to be implicitly assumed that $\bigoplus_{z\in \cal H} im(A_z)=im(A_x)+im(A_y)$ for a dense subset  $(x,y)\in \cal H\times\cal H$ along $F$. Although this statement is true for the homogeneous submersions in Theorem \ref{thm:GW}, one may believe that it generically  does not hold if $\dim( TF)>2\dim(\cal H)$.



\subsection{First gap}

For $p \in F$ let $T_p\bb R^{n+k} = \cal{H}_p + T_pF$ denote the orthogonal decomposition into the horizontal and the vertical space at $p$. For $x \in \cal{H}_p$, denote the adjoint of $A_x:\cal H_p\to T_pF$ by $A^*_x : T_pF \to \cal{H}_p$,  noting that $im(A_x)^\bot=\ker (A^*_x)$.

The next step in \cite{gromoll2001} was to prove that $im(A_x)$ defines a foliation by parallel affine subspaces.
This could be achieved by proving that, if $\gamma$ is a geodesic on $F$ satisfying $\gamma'(0)\in (im~A_x)^\perp=\ker (A^*_x)$, then $\gamma'(t)\in \ker (A^*_{x})$ for all $t$.
The first gap lies in the following claim (see \cite{gromoll2001}, section $3$). 

\begin{claim}\label{errorclaim}
For $a \in F$, let $x \in \mathcal{H}_{a}$ and  $u  \in ker (A^*_{ x})$. 
Then $A^*_{ x} \dot \gamma_{u} (t)=0$ for all $t$, where $\gamma_{u} (t) :=  a + tu$ is a line in $F$. 
\end{claim}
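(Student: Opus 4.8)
The plan is to work along $F$ and to show that the vector $A^*_x\dot\gamma_u(t)$ solves, along the line $\gamma_u$, an ODE that forces it to stay zero.

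First, the geometry of $F$. By Theorem \ref{thm:GW}(1) the fiber $F=\bb R^k\times\{0\}$ is an affine subspace, and it is totally geodesic; hence along $F$ the vertical spaces $T_pF=\bb R^k\times\{0\}$ and the horizontal spaces $\mathcal H_p=\{0\}\times\bb R^n$ are constant, the geodesics of $F$ are straight lines, and the fiber O'Neill tensor $T$ vanishes identically on $F$. In particular a fixed $x\in\mathcal H_a$ extends to a parallel horizontal field along $F$, and $A_x$ and its adjoint $A^*_x$ are well-defined bundle maps along $F$. I will also use the consequence of the fiber--volume argument recalled in \S\ref{gap}: $\nabla_v(A_xz)=0$ for every horizontal $z$ and every $v\in im(A_x)$; since the leaves of $im(A_x)$ are affine, this says that $A_x$ --- and hence $im(A_x)$ and $\ker(A^*_x)$ --- is constant along each leaf of $im(A_x)$ (on the dense open subset where $im(A_x)$ has locally constant rank, the rest following by continuity).

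Now fix $a,x$ and $u\in\ker(A^*_x)$, write $\gamma=\gamma_u$ and $\phi(t):=A^*_x\dot\gamma(t)\in\bb R^n$, so that $\phi(0)=0$; we must prove $\phi\equiv 0$. Differentiating, and using that $\bb R^{n+k}$ is flat to interchange the two covariant derivatives, that $T\equiv 0$ on $F$ (so the vertical projection is parallel along $F$ in vertical directions, and brackets of vertical fields with basic fields are vertical), and the identity $A_zz=0$ to absorb the algebraic term, one finds for horizontal $y$
\[
\langle\phi'(t),y\rangle=\langle\nabla_u(A_xy),u\rangle=-\langle A^*_yu,A^*_xu\rangle-\langle(\nabla_xT)_uu,y\rangle ,
\]
and taking $y=\phi(t)$ (and using $A_\phi\phi=0$ again) this collapses to
\[
\tfrac{d}{dt}\,|\phi(t)|^2=-2\,\langle(\nabla_xT)_uu,\phi(t)\rangle .
\]
On the other hand, since $\bb R^{n+k}$ is flat and $T\equiv 0$ on $F$, the O'Neill curvature equation yields $\langle(\nabla_xT)_uu,x\rangle=-|A^*_xu|^2$; polarizing in the horizontal slot, the \emph{symmetric} part of $(z,w)\mapsto\langle(\nabla_zT)_uu,w\rangle$ equals $-\langle A^*_zu,A^*_wu\rangle$, which vanishes on the pair $(x,\phi)$ because $\langle A^*_xu,A^*_\phi u\rangle=\langle A_\phi\phi,u\rangle=0$. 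Hence $\tfrac{d}{dt}|\phi|^2=-\beta(x,\phi(t))$, where $\beta$ is the \emph{antisymmetric} part of that bilinear form. The remaining task is to show $\beta(x,\phi(t))\equiv 0$ along $\gamma$: then $|\phi|$ is constant, so $\phi\equiv 0$, which is the claim.

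This last vanishing is the main obstacle --- and exactly where \cite{gromoll2001} is incomplete. It is not a pointwise identity: the antisymmetric form $\beta$ is non-zero for general Riemannian submersions with one totally geodesic fiber (equivalently $\langle\nabla_u(A_xy),u\rangle=-\tfrac12\beta(x,y)$ need not vanish), so one must use the global structure --- the rigidity recorded above, that $A_x$ is constant along the affine leaves of $im(A_x)$ meeting $\gamma$, together with the fiber--volume technique of \cite[\S3]{gromoll2001} --- to exclude a ``source'' that would let $\ker(A^*_x)$ turn away from $u$. (In \cite{gromoll2001} it is instead tacitly assumed that $im(A_x)$ already behaves as for a homogeneous submersion.) Alternatively, one can bypass Claim \ref{errorclaim} entirely and prove directly, by the same fiber--volume argument, that $A_xy$ is parallel along all of $F$, as in the Remark following Proposition \ref{prop:AXYparallel}; the claim is then immediate.
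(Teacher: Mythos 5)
Your proposal is not actually a proof of Claim~\ref{errorclaim}, and you say so yourself: you reduce the statement to the vanishing of an antisymmetric form $\beta(x,\phi(t))$ along $\gamma_u$ and then explicitly acknowledge that you cannot establish this without further global input. That is a genuine gap in the argument. However, this is the right place to stop, because the paper itself does \emph{not} prove Claim~\ref{errorclaim} either. What you were asked to reproduce is labeled ``Discussion of the proof of Claim~\ref{errorclaim},'' and its purpose is to explain why the argument in \cite{gromoll2001} is incomplete: the Gromoll--Walschap proof constructs a variation $V$ by horizontal geodesics, projects it to a variation $W$ on $M$ by geodesics, shows that the associated Jacobi field $Y$ has $Y(0)=0$ and $Y'(0)=0$ hence $Y\equiv 0$, and then \emph{wrongly} infers that $x$ is basic along $\gamma_u$. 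The paper's example ($W(t,s)=te_1+s^2e_2$) shows that a trivial variational field does not make the variation trivial, so the inference fails. The paper then abandons Claim~\ref{errorclaim} entirely and in Section~\ref{sec:3} proves Proposition~\ref{prop:AXYparallel} directly, by showing that $A_XY$ is a constant-length gradient on $F$.

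So your analysis and the paper's agree in substance though not in method. You attack the claim via an ODE for $|\phi(t)|^2=|A^*_x\dot\gamma_u(t)|^2$, identify the offending antisymmetric part of $(z,w)\mapsto\langle(\nabla_zT)_uu,w\rangle$, and observe (correctly) that nothing local forces it to vanish; the paper attacks it via the projected Jacobi field and observes that $Y\equiv 0$ is not enough to conclude. Both routes hit the same wall, namely that one is implicitly assuming homogeneous-like behaviour of $\mathrm{im}(A_x)$ that has not been established. And both of you land on the same fix: bypass Claim~\ref{errorclaim} and prove directly that $A_xy$ is parallel along $F$, which is exactly what the Remark after Proposition~\ref{prop:AXYparallel} and Section~\ref{sec:3} carry out. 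One caution on your intermediate computation: even if you could show $\beta(x,\phi(t))\equiv 0$, you would get $\tfrac{d}{dt}|\phi|^2\equiv 0$ with $\phi(0)=0$ and conclude $\phi\equiv 0$; but the a priori bound you actually have, $\bigl|\tfrac{d}{dt}|\phi|^2\bigr|\le C|\phi|$, is of the non-Lipschitz type $g'\le C\sqrt{g}$, for which $g(0)=0$ does \emph{not} force $g\equiv 0$. So even granting the vanishing of $\beta$ in the symmetric sector, the ODE framing alone would not close the argument without an extra structural input --- which is a second, more quantitative, reason why this route was not pursued in the paper.
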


\begin{proof}[Discussion of the proof of Claim \ref{errorclaim}]
For $u \in ker (A^*_{x})$ we consider the variation $V$ on $[0,1] \times (-1,1)$, $V(t,s) := exp _{\gamma_u(s)}  (t  x) $ by horizontal geodesics which projects to the variation $W = \pi \circ V$ by geodesics on $M$.  
Likewise, since $V$ is by horizontal geodesics, its variational field $V_* D_s(t,0)$ is a Jacobi field that projects to the Jacobi field 
$Y(t) := (\pi \circ V) D_s(t,0) =  W_{\star} D_{s}(t,0)$  on $M$ induced by $W$. 
$Y$ satisfies $Y(0) = 0$ and 
\beam 
\label{Ynull}
\begin{array}{lcl}
\nonumber 
	Y'(0) &=& \nabla_{D_{t}(0,0)} ((\pi \circ V)_* D_{s}) 
	=  \pi_{\star} \nabla_{D_{t}(0,0)} (V_{\star} D_{s} )^{h} \\
	&=& - \pi_{\star} \nabla^{h}_{D_{t}(0,0)}(V_{\star}D_{s})^{v} 
	= \pi_{\star} A^{\star}_{x}u = 0,
\end{array}
\eeam
since $V_* D_s(0,0)=u$. 
The second equality follows since the fields $V_* D_t(t,0)$ and $(V_* D_s(t,0))^h$ are horizontal fields along $t \mapsto exp_{a}(tx)$.

The third equality is due to the identity 
\begin{multline*}
	\pi_{\star} \nabla^{h}_{D_{t}(0,0)} (V_{\star}D_{s})^{h} + \pi_{\star} \nabla^{h}_{D_{t}(0,0)} (V_{\star}D_{s})^{v}  
	= \pi_{\star} \nabla_{D_{t}(0,0)} (V_{\star}D_{s})\\ 
	= \pi_{\star} \nabla_{D_{s}(0,0)}( V_{\star} D_{t} ) 
	=  \pi_{\star} \nabla_{u} x = 0.
\end{multline*}
We follow $Y\equiv 0$ along $t \mapsto W(t,0)$.

At this point, $ x$ is stated to be  basic along $\gamma_u$.  
However, even though $W$ is a variation by geodesics emanating from a single point and the variational field $Y$ is trivial along the geodesic $t \mapsto W(t,0)$, 
this is not sufficient to imply that $ x$ is a basic field along $\gamma_u$. 
Indeed, one needs to show that $W_* D_s (t,s)=0$ for all $(t,s) \in [0,1] \times (-1,1)$. 
Then, $x$ is mapped to a single vector in $M$ since $t \mapsto W(t,s)$ corresponds to the geodesic $t\mapsto W(t,0)$ for all $s$. 
Hence, further arguments are required. The underlying issues can be seen in:
\begin{example}
Let $M = \bb R^2$ and $e_1$ and $e_2$ be the standard basis.  
Consider the variation $W : [0,1] \times (-1,1) \to \bb R^2$  given by $W(t,s) = t e_1 + s^2 e_2$ of the geodesic $t \mapsto t e_1$. 
Then its variational field $Y$ is trivial. But for $s\neq0$, $t \mapsto W(t,s)$ does not coincide with the geodesic $t \mapsto t e_1$.
\end{example}

\noindent On the other hand, if we assume that the field $x$ is  indeed basic along $\gamma_u$, then
\begin{equation*}
	A^{\star}_{x} \dot \gamma_{u} 
	= - \nabla^h _{\dot \gamma_u} x 
	= - \nabla ^{h} _{ u} ( x  \circ \gamma_{u} ) = 0,
\end{equation*}
 and Claim \ref{errorclaim} follows. 
\end{proof}

\subsection{Second gap}
Define the sets
\begin{equation*}
 	\cal A_p=span\{A_xy~|~x,y\in\cal H_p\}, \quad im(A) := \bigcup _{p \in F}  \mathcal{A}_{p}. 
\end{equation*}

According to Claim \ref{errorclaim}, the distributions $im{(A)}$ and~ $im({A})^{\perp}$ define an isometric splitting  $F\cong\bb R^l\times \bb R^{k-l}$, which extends to the whole ambient space, $ \bb R^{n+k} $, via holonomy transport. 
These properties result into a factorization of the projection $\pi$:

\begin{prop} \label{Aparallel} 
Assume that Claim \ref{errorclaim} is true. 
Then $\pi$ factors as an orthogonal projection $\bb R^{n+k-l} \times \bb R^{l} \rightarrow \bb R^{n+k-l} \times \{ 0\}$ followed by a Riemannian submersion $\pi' : \bb R^{n+k-l} \rightarrow M$. 
In particular, the fiber $F' := pr(F)$ is an affine subspace satisfying $TF' = im(A)$. 
\end{prop}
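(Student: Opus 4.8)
The plan is to first convert Claim \ref{errorclaim}, together with the identity $\nabla_v(A_xy)=0$ for $v\in im(A_x)+im(A_y)$ established in \cite[Section 3]{gromoll2001}, into an isometric product splitting of $F$, and then to spread that splitting over all of $\bb R^{n+k}$ by holonomy transport. The starting point is that $\cal H$ is constant along $F$: since $F=\pi^{-1}(b)$ is totally geodesic, along any curve in $F$ the covariant derivative of a horizontal field equals its normal covariant derivative and hence stays horizontal, and flatness of $\bb R^{n+k}$ then forces the horizontal (hence normal to $F$) distribution $\cal H$ to be a fixed subspace along the affine $k$-plane $F$. Thus, for $x,y$ in the constant space $\cal H$, the maps $A_x\in\Hom(\cal H,TF)$, $A^*_x\in\Hom(TF,\cal H)$ and the vector $A_xy\in TF$ are honest smooth functions on $F\cong\bb R^k$, and $\cal A_p=\sum_{x\in\cal H}im(A_x)|_p\subseteq TF$.

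\textbf{Splitting of $F$.} Inserting $v\in im(A_x)|_p$ into the identity of \cite[Section 3]{gromoll2001} yields $\nabla_v A_x=0$ for all such $v$, so $A_x$ — and with it $im(A_x)$ and $\ker A^*_x$ — is constant along the affine subspace $p+im(A_x)|_p$; this is the ``totally geodesic leaves'' property. Claim \ref{errorclaim} supplies the complementary fact that, along any line in a direction $u\in\ker A^*_x|_p$, the subspace $\ker A^*_x$ keeps containing $u$. Running $x$ over $\cal H$ and intersecting, $im(A)^\perp=\bigcap_{x}\ker A^*_x$ is invariant under the line flow in its own directions, while $im(A)$ is spanned by distributions with totally geodesic leaves; combining the two one concludes that $im(A)$, equivalently $im(A)^\perp$, is a \emph{parallel} distribution on $F$, i.e. a fixed subspace $V_0\subseteq TF$ with fixed orthogonal complement $W$. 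Hence $F$ splits isometrically as an orthogonal product of affine subspaces $F\cong\bb R^l\times\bb R^{k-l}$, $l:=\dim W$, the two factors being tangent to $im(A)^\perp$ and $im(A)$ respectively.

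\textbf{Globalization and factorization.} Since $M$ is complete, every fiber $\pi^{-1}(m)$ is reached from $F$ by the family of horizontal lifts of a geodesic from $b$ to $m$; horizontal lifts of geodesics are geodesics, hence straight lines in $\bb R^{n+k}$, and I would carry the splitting of $F$ outward along these lifts by holonomy transport. Using that $A^*_x w=0$ for all $x\in\cal H$ and $w\in W$ along the totally geodesic fiber $F$, one shows that holonomy transport sends $W$ to the constant $l$-plane $W\subseteq\bb R^{n+k}$ and that this $W$ stays tangent to every fiber of $\pi$; equivalently, the translations of $\bb R^{n+k}$ by vectors of $W$ are isometries preserving $\cal F$. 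Letting $pr:\bb R^{n+k}=\bb R^{n+k-l}\times\bb R^l\to\bb R^{n+k-l}\times\{0\}$ be the orthogonal projection with kernel $W$, the $W$-invariant foliation $\cal F$ descends to a Riemannian foliation of $\bb R^{n+k-l}$, whose associated submersion $\pi':\bb R^{n+k-l}\to M$ is Riemannian and satisfies $\pi=\pi'\circ pr$. Finally $F'=pr(F)$ is an affine subspace, and since $pr$ kills $W$ and is injective on $V_0=im(A)$ (because $V_0\cap W=0$), we obtain $TF'=pr(TF)=pr(V_0)=im(A)$.

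\textbf{Main obstacle.} I expect the load-bearing step to be the passage, inside the splitting of $F$, from ``each $im(A_x)$ has totally geodesic leaves'' (a formal consequence of the Section 3 identity) to ``$im(A)$ is parallel'': autoparallelism by itself does not make a distribution on $\bb R^k$ constant — the radial foliation of $\bb R^m\setminus\{0\}$, say, is autoparallel — so one must genuinely exploit Claim \ref{errorclaim} \emph{and} the global Riemannian structure of $\cal F$ on $F$, and also exclude drops in the rank of $im(A)$ rather than working only on an open dense set; this is exactly where \cite{gromoll2001} had its gap. A secondary, more routine point is verifying that $W$ remains tangent to $\cal F$ away from $F$ in the globalization step, which should follow from the rigidity of a Riemannian submersion near a totally geodesic fiber.
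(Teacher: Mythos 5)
Your proposal follows the same outline the paper gives in the paragraph preceding the proposition: Claim \ref{errorclaim}, together with the fact from \cite[Lemma~2.4]{gromoll2001} that $A_xy$ is parallel in $im(A_x)+im(A_y)$ directions, is supposed to produce an isometric splitting $F\cong\bb R^{l}\times\bb R^{k-l}$ with factors tangent to $im(A)^\perp$ and $im(A)$, which is then propagated to all of $\bb R^{n+k}$ by holonomy transport and yields the factorization $\pi=\pi'\circ pr$. Your preliminary observation that $\cal H$ is a fixed subspace along $F$ (since $F$ is a totally geodesic affine subspace of a flat space) is correct and is used implicitly throughout.

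The obstacle you flag is real, and it is not resolved in the paper either. Claim \ref{errorclaim} only states that an individual vector $u\in\ker A^*_x|_p$ stays in $\ker A^*_x$ along the single line $p+tu$; it does not say that the whole subspace $\ker A^*_x|_p$, let alone the intersection $im(A)^\perp=\bigcap_x\ker A^*_x$, is mapped into itself along that line, and combining this with the autoparallel property of each $im(A_x)$ still leaves the passage to a \emph{constant} splitting $TF=im(A)\oplus im(A)^\perp$ unjustified (rank constancy is one of the missing inputs, as the paper itself notes in passing when it says the constant-rank statement holds only on an open dense set). But you should not read the paper as claiming to supply this missing argument: Proposition \ref{Aparallel} is stated conditionally and without a proof environment, as a restatement of Gromoll--Walschap's Step~2 whose sole purpose is to set up the description of the second gap; the paper's actual new contribution sidesteps Claim \ref{errorclaim} and Proposition \ref{Aparallel} entirely via the direct argument of Section~\ref{sec:3}. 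So your reconstruction is faithful to the paper's sketch, your globalization step is reasonable, and the ``main obstacle'' you identify is a genuine gap in the Gromoll--Walschap route rather than a flaw you introduced.
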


\begin{displaymath}
\xymatrix{  	F \ar[d]^{pr}\ar[r] 	&  \bb R^{n+k } \ar[d]_{pr} \ar[rd]^{\pi} 	& \\
	F' = im (A)\times \{0\}	\ar[r]&  \bb R^{n+k-l} \times \{0\}  \ar[r]^{\ \ \ \ \pi'} 	&M }
\end{displaymath}

\noindent What we therefore obtain is a Riemannian submersion $\pi'$ which only contains the 'twisting part' of the former submersion $\pi$.  
Although $F'$ is spanned by integrability fields, the induced metric foliation $\mathcal{F}'$ of $\pi'$ is not necessarily \textit{substantial} along $F'$. 
That is, one can not guarantee that there is a single horizontal $x\in (T_pF')$ such that $im(A_x)=T_pF'$. 

This observation is relevant since the concluding argument in \cite{gromoll2001}, in the proof that $A_xy$ is parallel, seems to be based on the substantiality of $\cal F'$: recall that $\nabla_v(A_xy)=0$ for all $v\in im(A_x)+im(A_y)$. If $\cal F'$ is substantial, there is an open and dense set of horizontal vectors $z\in\cal H_p$, $p\in F'$, such that $im(A_z)=T_pF'$. In particular,  it would follow that $\nabla_v(A_xy)=0$ for all $~x,y\in\cal H$ and $v\in TF'=im(A_x)=im(A)$. Otherwise, $A_xy$ could be only parallel on $im(A_x)+im(A_y)$, but not on the whole $im(A)$.
More specifically, the following statement in \cite{gromoll2001} lacks a proof:


\begin{claim}\label{missarg} 
An argument similar to the one that led to \cite[Lemma 2.4]{gromoll2001} implies that each  field $A_{x} y$ is parallel along the fiber $F'$ of $\pi'$ for parallel horizontal fields $x$ and $y$  along $F'$.
\end{claim}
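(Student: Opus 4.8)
The plan is to isolate the essential new input and then run the volume-form argument of \cite[section 3]{gromoll2001} on the twisted submersion $\pi'$, where $F'$ is now spanned by integrability fields. First I would recall the one concrete conclusion we are allowed to use: for parallel horizontal $x,y$ along $F'$, the identity $\nabla_v(A_xy)=0$ holds for all $v\in im(A_x)+im(A_y)$, and moreover (by the argument of section \ref{gap} applied to $\pi'$) the distributions $im(A_x)$ foliate $F'$ by affine subspaces. The remaining task is precisely to upgrade "$\nabla_v(A_xy)=0$ for $v\in im(A_x)+im(A_y)$'' to "$\nabla_v(A_xy)=0$ for all $v\in TF' = im(A)$''. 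Since $A_xy$ is a section of $TF'$ and $TF'=im(A)$ by Proposition \ref{Aparallel}, it suffices to show $\nabla_u(A_xy)=0$ for $u$ in a complement of $im(A_x)+im(A_y)$ inside $im(A)$, i.e. for $u\in (im(A_x)+im(A_y))^\perp\cap im(A)$.

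The key step is to exploit that any such $u$ is itself a sum of integrability vectors $A_{z}w$. Fix $p\in F'$ and $u=A_zw\in T_pF'$ with $u\perp (im(A_x)+im(A_y))$; it is enough to treat this case and sum. Now I would extend $z,w$ to parallel horizontal fields along $F'$ and consider the geodesic $\gamma(t)=p+tu$. By the affine-foliation property (Step 1, valid for $\pi'$), $u\in\ker(A^*_x)$ implies $\dot\gamma(t)\in\ker(A^*_x)$ for all $t$, and likewise for $y$; hence $A_x y$ restricted to $\gamma$ has values orthogonal to... no — rather, the point is that $im(A_x)$ is a \emph{parallel} affine distribution, so parallel transport of $A_xy$ along $\gamma$ stays in $im(A_x)$, and one computes $\nabla_{\dot\gamma}(A_xy)$ in terms of the second fundamental form of the leaf of $im(A_x)$ through $p$, which vanishes because the leaves are affine. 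This is the analogue of the "similar argument'' alluded to in Claim \ref{missarg}: the affine-foliation structure established in Step 1 forces $A_xy$ to be parallel in the directions transverse to $im(A_x)$ as well, not merely along $im(A_x)+im(A_y)$.

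Concretely, I would argue as follows. Let $\mathcal{D}_x=im(A_x)$, a foliation of $F'$ by affine subspaces with totally geodesic (indeed flat, affine) leaves; the orthogonal complement $\mathcal{D}_x^\perp$ is then also a parallel distribution, and $F'\cong L_x\times L_x^\perp$ isometrically and affinely. Writing $A_xy = $ (a section of $\mathcal{D}_x$), decompose $\nabla_u(A_xy)$ for $u\perp\mathcal{D}_x$ using this product structure: the $\mathcal{D}_x$-component is $\nabla^{L_x}_{u^\top}(\cdots)=0$ since $u^\top=0$, while the $\mathcal{D}_x^\perp$-component is $-\langle A_xy,\nabla_u(\text{normal frame})\rangle$, which is the second fundamental form of $L_x$ evaluated on $u$ and $A_xy$ — zero because $L_x$ is affine. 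Combining over all $x$ (and the symmetric roles of $x,y$, using $\mathcal{D}_x,\mathcal{D}_y\subseteq \mathcal{D}_x+\mathcal{D}_y$ where we already have parallelism) yields $\nabla_u(A_xy)=0$ for every $u\in \mathcal{D}_z$ with $z$ arbitrary, hence for every $u\in im(A)=TF'$.

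The main obstacle I anticipate is making the product-splitting step uniform: the rank of $im(A_x)$ is only locally constant on an open dense set, so a priori $F'$ splits affinely only over that set, and one must check that the parallel vector field $A_xy$ — which is globally defined and smooth on all of $F'$ — extends the parallelism across the exceptional (lower-dimensional) locus by continuity. Since $A_xy$ is smooth and $\nabla_u(A_xy)=0$ holds on a dense set, it holds everywhere by continuity of $\nabla(A_xy)$; this disposes of the rank-jump issue. A secondary point requiring care is that the affine-foliation conclusion of Step 1 must genuinely be available for $\pi'$ and not just for $\pi$ — but this is exactly what Proposition \ref{Aparallel} (granting Claim \ref{errorclaim}) delivers, since $\pi'$ is again a Riemannian submersion of a Euclidean space with totally geodesic soul fiber $F'$, to which the section \ref{gap} analysis applies verbatim.
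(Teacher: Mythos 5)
First, a structural point: the paper does not actually prove Claim~\ref{missarg}. It \emph{cites} this claim as the second unproven gap in Gromoll--Walschap's argument (``the following statement in \cite{gromoll2001} lacks a proof'') and then \emph{avoids} it entirely by proving Proposition~\ref{prop:AXYparallel} directly via the gradient argument of Section~\ref{sec:3}, never reducing to $\pi'$ or $F'$ at all. So there is nothing in the paper for your proposal to be ``the same approach as.''

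More importantly, your proposed proof contains exactly the same error that the paper is pointing out. Your decisive step is the product-splitting computation: you take $u\perp\mathcal{D}_x$, split $F'\cong L_x\times L_x^\perp$ (granting the affine foliation of Step 1), and claim both components of $\nabla_u(A_xy)$ vanish --- the $\mathcal{D}_x^\perp$-component because $L_x$ is affine (this part is fine: parallelism of the distribution $\mathcal{D}_x$ does give $(\nabla_u Z)^\perp=0$ for $Z\in\Gamma(\mathcal{D}_x)$), and the $\mathcal{D}_x$-component ``since $u^\top=0$.'' The second assertion is false. In a flat product $L_x\times L_x^\perp$, a section $Z$ of $TL_x$ is a $TL_x$-valued function that may depend on the $L_x^\perp$-coordinates; its transverse derivative $(\nabla_u Z)^{TL_x}$ is just the ordinary partial derivative of that function in the $u$-direction and is generically nonzero. ``$\mathcal{D}_x$ is parallel'' and ``the leaves of $\mathcal{D}_x$ are affine'' say nothing about whether a particular section of $\mathcal{D}_x$ is constant transversally --- consider $F'=\mathbb{R}^2$, $\mathcal{D}=\mathrm{span}\,e_1$, $Z=f(x_2)\,e_1$. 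What you have actually established after this step is only that $\nabla_u(A_xy)$ stays in $\mathcal{D}_x$, which is weaker than parallelism and leaves the claim exactly as open as before.

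This is precisely the missing content the paper identifies: after Lemma~\ref{lem:GW} one only knows $\nabla_v(A_xy)=0$ for $v\in im(A_x)+im(A_y)$, and ``Otherwise, $A_xy$ could be only parallel on $im(A_x)+im(A_y)$, but not on the whole $im(A)$.'' You have restated the desired conclusion in the language of the product splitting rather than proved it. Indeed, the paper's own Lemma~\ref{lem1}(iv) shows that for $u,u'\in\mathcal{I}^\perp$ one has $\lr{\nabla_u(A_XY),u'}=\lr{-(\nabla^v_XS)_Yu,u'}$, which is not visibly zero; the paper only uses the \emph{symmetry} of $(\nabla^v_XS)_Y$ to conclude that the $1$-form $\lr{A_XY,\cdot}$ is closed, hence $A_XY$ is a gradient, and then invokes constancy of $\|A_XY\|$ to get parallelism. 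So the transverse directional derivative you are trying to kill directly is handled only indirectly in the paper, and your direct argument for its vanishing does not go through.

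\end{document}
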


\cite[Lemma 2.4]{gromoll2001} only proves that $\nabla_v(A_xy)=0$ for  $v\in im(A_x)+im(A_y)$ (it is restated in the next section). 

\section{$A_XY$ is parallel}\label{sec:3}

From now on, we fix basic  fields  $X,Y$ along $F$. We directly prove that $A_XY$ is parallel along $F$, avoiding Claim \ref{errorclaim} and Proposition \ref{Aparallel}.
We start just before the first gap by recalling from \cite{gromoll2001} that:

\begin{lemma}[\cite{gromoll2001}, Lemma 2.4]\label{lem:GW}
Let $p\in F$. Then, $(\nabla_vA)_XY=0$ for all $X,Y\in \cal H_p$ and $v\in {im}(A_X)+{im}(A_Y)$.
\end{lemma}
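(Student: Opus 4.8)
The plan is to extract the identity from the behaviour of horizontal geodesics issuing from the totally geodesic fiber $F$, together with the flatness of $\bb R^{n+k}$. Since $(\nabla_v A)_XY$ is tensorial in all three slots, it suffices to treat $v=A_XW$ with $W\in\cal H_p$; the case $v\in\mathrm{im}(A_Y)$ then follows from $A_XY=-A_YX$, and a general $v\in\mathrm{im}(A_X)+\mathrm{im}(A_Y)$ by linearity. Extend $X,Y,W$ to basic fields near $p$ and set $\gamma(t)=\exp_p(tX)=p+tX$; as the ambient is flat this is a straight line, and being initially horizontal it stays horizontal, projecting to a geodesic $\bar\gamma=\pi\circ\gamma$ of $M$. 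Along $\gamma$ one has the holonomy diffeomorphisms $h_t\co F\to\pi^{-1}(\bar\gamma(t))$ and the shape operators $S(t)=S_{\gamma'(t)}$ of the fibers, subject to a Riccati equation $\tfrac{D}{dt}S+S^2+A^{*}_{\gamma'}A_{\gamma'}+R_{\gamma'}=0$ in which the ambient curvature operator $R_{\gamma'}(\cdot)=R(\cdot,\gamma')\gamma'$ vanishes. Since $F$ is totally geodesic, the second fundamental form of the fibers — and hence, its vertical–horizontal part being the adjoint of its vertical–vertical part via $\langle\nabla_UX,W'\rangle=-\langle X,\nabla_UW'\rangle$, the whole O'Neill tensor $T$ — vanishes all along $F$, so also $\nabla_wT\big|_p=0$ for every $w\in T_pF$.

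I would then use the fiber volume density transported by $h_t$, whose logarithmic derivative is $\pm\operatorname{tr}S(t)=\pm\langle N_{\gamma(t)},\gamma'(t)\rangle$ with $N$ the mean curvature vector of the fibers. Since $N$ vanishes on $F$ — for every starting point on $F$ — this density is stationary at $t=0$, as it must be with $F$ over the soul; differentiating again and using $S(0)=0$, $R\equiv0$ gives the second-order coefficient $-\operatorname{tr}(A^{*}_XA_X)=-|A_X|^2$, which involves only horizontal data. To produce the \emph{vertical} derivative I would differentiate $q\mapsto|A^{(q)}_X|^2$ as $q$ runs along the line $s\mapsto p+sv$ in $F$. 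The key is that $v=A_XW$ is precisely the second-order vertical spread of the pencil of horizontal geodesics $t\mapsto\exp_p(t\,x(s))$, $x(s)\in\cal H_p$, $\dot x(0)=W$: by flatness the variation field of this pencil is the affine Jacobi field $t\,P_tW$ ($P_t$ = parallel transport along $\gamma$), so its vertical part is $\mp t^2A_XW+O(t^3)$. Matching this spread against the stationarity of the volume density at $F$ yields $(\nabla_vA)_XY$ up to curvature-free remainders quadratic in $A$, and these cancel exactly when $v\in\mathrm{im}(A_X)+\mathrm{im}(A_Y)$.

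An equivalent but more algebraic route is to substitute $R\equiv0$ into O'Neill's curvature identities (Besse, \emph{Einstein Manifolds}, Ch.~9): the one expressing the mixed component $R(X,v,Y,W')$ of $\bb R^{n+k}$ through $(\nabla_vA)_XY$, terms quadratic in $A$, and terms in $T$ and $\nabla T$ collapses, along $F$ where $T\equiv0$ and $\nabla_vT\big|_p=0$, to a formula for $(\nabla_vA)_XY$ purely in terms of $A$; one then checks that restricting $v$ to $\mathrm{im}(A_X)+\mathrm{im}(A_Y)$ makes the surviving $A$-terms vanish. Either way the crux — and the step I expect to cost the most work, bookkeeping-wise — is isolating the vertical covariant derivative from the second-order data along $\gamma$ and checking that the leftover terms, already harmless because $F$ is totally geodesic, die precisely under the stated hypothesis on $v$; the geometric content being that $\mathrm{im}(A_X)+\mathrm{im}(A_Y)$ is exactly the set of directions into which horizontal geodesics leaving $F$ bend to second order, so an infinitesimal displacement of $F$ in such a direction is registered by the fiber volume, which is stationary there.
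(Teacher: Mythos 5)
The paper does not prove this lemma; it cites it verbatim from Gromoll--Walschap (their Lemma 2.4), and the only indication it gives of the method is the remark that GW obtain it via ``a very interesting argument using the fiber volume form.'' So there is no in-paper proof to compare against. Your primary route (transporting the fiber volume density along horizontal geodesics, differentiating via the Riccati equation for the shape operators, using $R\equiv 0$ and $S|_F=0$) is indeed aligned with what GW do. But as written it is a plan, not a proof: the two load-bearing sentences --- ``Matching this spread against the stationarity of the volume density at $F$ yields $(\nabla_vA)_XY$ up to curvature-free remainders quadratic in $A$'' and ``these cancel exactly when $v\in\mathrm{im}(A_X)+\mathrm{im}(A_Y)$'' --- assert precisely the content of the lemma without deriving it. You yourself flag this (``the step I expect to cost the most work''), but that step is the whole lemma; nothing in the sketch shows why the quadratic-in-$A$ remainder vanishes on $\mathrm{im}(A_X)+\mathrm{im}(A_Y)$ and not elsewhere, which is the entire point.

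The proposed ``more algebraic route'' is worse than incomplete --- it does not work as described. The O'Neill/Besse identity for the mixed component $R(X,U,Y,V)$ ($X,Y$ horizontal, $U,V$ vertical) reads, schematically, $R(X,U,Y,V)=\langle(\nabla_X T)_U V,Y\rangle+\langle(\nabla_U A)_X Y,V\rangle-\langle T_UX,T_VY\rangle+\langle A_XU,A_YV\rangle$. Setting $R\equiv 0$ and $T\equiv 0$ along $F$ kills the $T\cdot T$ term, but the surviving $\nabla T$ term is $\nabla_X T$, a \emph{horizontal} covariant derivative. Total geodesy of $F$ gives $\nabla_u T=0$ for \emph{vertical} $u$ (that is all your observation $\nabla_w T|_p=0$, $w\in T_pF$, says), not for horizontal $X$: $T$ vanishes on $F$ but not off it, so $\nabla_X T\neq 0$ in general. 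Hence the identity does not collapse to ``a formula for $(\nabla_v A)_X Y$ purely in terms of $A$''; it leaves a genuine $\nabla_X T$ (equivalently $\nabla^v_X S$, cf.\ the paper's equation~(3.2)) that must be controlled by some other mechanism --- which is exactly what the volume-form argument supplies. So the algebraic detour cannot replace the analytic one, and the analytic one is not carried out.
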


In order to prove that $A_XY$ is parallel, we follow the proof of Theorem 2.6 in \cite{gromoll1997metric} and show that $A_X Y$ is the gradient of a function $f:F\to \bb R$. As in \cite{gromoll1997metric}, we recall that constant length gradients in affine spaces are parallel and that $\|A_XY\|$ is constant (as it follows from O'Neill's equation $3\|A_XY\|^2=R_M(\pi_ *X,\pi_*Y,\pi_*Y,\pi_*X)$).

Denote $\cal I={im}(A_X)+{im}(A_Y)$, thus $\cal I^\bot=\ker{(A_X^*)}\cap \ker (A_Y^*)$.

\begin{lemma}\label{lem1} 
	For every $u,u'\in \cal I^\bot$ and $v\in\cal I$:
	\begin{enumerate}[$(i)$]
		\item $\nabla_v(A_XY) = (\nabla^v_YS)_Xv-(\nabla_X^vS)_Yv$
		\item $\lr{\nabla_u(A_XY),v} = 0$
		\item $\lr{\nabla_v(A_XY),u} = 0$
		\item $\lr{\nabla_u(A_XY),u'} = \lr{-(\nabla^v_{X}S)_Yu,u'}$
	\end{enumerate}
\end{lemma}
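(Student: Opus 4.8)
The plan is to establish the four identities in Lemma~\ref{lem1} by unwinding the definitions of the O'Neill tensors $A$ and $S$ (the second fundamental form of the fibers, here written $S_X v = -(\nabla_v X)^v$ for horizontal $X$ and vertical $v$, so that $\langle S_X v, w\rangle = \langle A_X^* w, \cdot\rangle$-type dualities hold) together with the basic structure equations for a Riemannian submersion and the fact that $F$ is totally geodesic and flat (being an affine subspace of Euclidean space, with $\nabla$ the ambient flat connection restricted to $F$). Throughout I would use that $X,Y$ are basic, so $\nabla_v X$, $\nabla_v Y$ are purely vertical for $v$ vertical (since $\langle \nabla_v X, Z\rangle = -\langle X, \nabla_v Z\rangle$ vanishes for basic $Z$ by the standard computation, using $[v,X]$ vertical), which lets me identify $(\nabla_v X)^v = -S_X v$ and similarly for $Y$.

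For $(i)$: since $A_X Y = \nabla_X^v Y = (\nabla_X Y)^v$ and both $X,Y$ are basic, I would compute $\nabla_v(A_X Y)$ for $v\in\cal I$ by writing $A_X Y$ in a way that exposes its vertical nature and differentiating; the point is that $(\nabla_v A_X Y)^v$ splits, via the product rule and the skew-symmetry $\langle A_X Y, Z\rangle = -\langle A_X Z, Y\rangle$ for horizontal $Z$, into the two $S$-terms. Here Lemma~\ref{lem:GW} ($(\nabla_v A)_X Y = 0$ for $v\in\cal I$) is the crucial input: it converts $\nabla_v(A_X Y)$ into $A_{\nabla_v X} Y + A_X \nabla_v Y$ plus correction terms, and using $(\nabla_v X)^v = -S_X v$ etc. together with the symmetry $A_X Y = -A_Y X$ one collects exactly $(\nabla^v_Y S)_X v - (\nabla^v_X S)_Y v$. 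For $(ii)$: $\langle \nabla_u(A_X Y), v\rangle = u\langle A_X Y, v\rangle - \langle A_X Y, \nabla_u v\rangle$; the first term vanishes because $A_X Y$, being horizontal... wait — $A_X Y$ is \emph{vertical}. Let me restate: $A_X Y$ is vertical, $v\in\cal I\subset TF$ is vertical, $u\in\cal I^\perp$ vertical; since $F$ is flat and totally geodesic, $\nabla_u v$ is the flat derivative inside $F$, and one shows $u\langle A_X Y, v\rangle$ and the remaining pairing cancel using $(i)$ applied in the form that $\nabla_v(A_X Y)$ has no $\cal I^\perp$-component paired against — more honestly, $(ii)$ and $(iii)$ are the ``mixed'' components and will follow from $(i)$ by taking inner products with $u\in\cal I^\perp$ (for $(iii)$, directly: $(i)$ expresses $\nabla_v(A_X Y)$ via $S$-terms whose pairing with $u\in\ker A_X^*\cap\ker A_Y^*$ vanishes because $\langle S_X v, u\rangle = \langle v, S_X u\rangle$ and $S_X u$ lies in $\operatorname{im}(A_X^*)$... ) and then $(ii)$ from $(iii)$ plus the symmetry of the Hessian-type expression $\langle \nabla_\cdot(A_X Y), \cdot\rangle$ in its two arguments, which holds because $A_X Y$ is a gradient on the flat space $F$ once we know it is closed — this closedness is itself part of what we are proving, so instead I would get $(ii)$ directly by the same dualization as $(iii)$ with the roles of $u$ and $v$ adapted.

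For $(iv)$: $\langle \nabla_u(A_X Y), u'\rangle$ with both $u,u'\in\cal I^\perp$. Expanding $\nabla_u(A_X Y) = (\nabla_u A)_X Y + A_{\nabla_u X} Y + A_X \nabla_u Y$ and using $(\nabla_u X)^v = -S_X u$, $(\nabla_u Y)^v = -S_Y u$, the terms $A_{-S_X u} Y$ and $A_X(-S_Y u)$ — wait, $A$ only takes horizontal arguments, so these are really the vertical-variation terms handled by the covariant derivative $(\nabla^v S)$; reorganizing via the structure equation $\nabla_u(A_X Y) = A_X(\nabla_u Y) + \dots$ and the definition $(\nabla^v_X S)_Y u = \nabla^v_X(S_Y u) - S_{\nabla^v_X Y} u - S_Y(\nabla^v_X u)$, the $A$-piece $(\nabla_u A)_X Y$ contributes (this is genuinely nonzero now since $u\notin\cal I$) and combines with the $S$-derivatives. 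The identity $(iv)$ says all of this collapses to $-\langle(\nabla^v_X S)_Y u, u'\rangle$; the mechanism is the Codazzi-type equation for the submersion in the ambient flat space, i.e. $R(u,u')X$-type curvature terms vanish in $\bb R^{n+k}$, forcing a relation among $(\nabla_u A)_X Y$, $(\nabla^v_X S)_Y u$ and $(\nabla^v_Y S)_X u$, and symmetry considerations kill one of the two $S$-terms. The main obstacle I expect is precisely bookkeeping in $(iv)$: correctly accounting for which covariant-derivative corrections ($\nabla^v_X u$ versus $\nabla_X^v Y$ versus the vertical projections of $\nabla_u X$) appear and cancel, and verifying that the relevant ambient curvature terms are exactly the ones that vanish. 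The other three parts are comparatively mechanical once the conventions for $S$, $A^*$, and ``basic'' are pinned down; $(i)$ is the workhorse and everything else is extracted from it by pairing against $\cal I$ or $\cal I^\perp$.
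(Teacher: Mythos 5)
Your plan identifies the right ingredients in spirit (structure equations, basic fields, flatness of $F$, symmetry of $S_X$), but it misses the single identity that makes all four parts of the lemma immediate, and as a result none of the four computations is actually carried out. The paper's proof rests entirely on the explicit O'Neill formula
\[
(\nabla^v_w A)_X Y = -A_Y A_X^* w - (\nabla_X^v S)_Y w, \qquad w \in TF,
\]
cited from Gromoll--Walschap's book. Combined with Lemma~\ref{lem:GW} (which forces $(\nabla_v^v A)_X Y = 0$ for $v \in \cal I$), this yields the universal formula
\[
\nabla_w(A_X Y) = -(\nabla_X^v S)_Y w - A_X A_Y^* w
\]
for all $w \in TF$, together with the substitution rule $A_X A_Y^* v = -(\nabla_Y^v S)_X v$ for $v \in \cal I$. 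Items $(i)$--$(iv)$ are then literally one- or two-line substitutions: $(i)$ plugs in $w = v \in \cal I$ and rewrites $A_X A_Y^* v$; $(ii)$ plugs in $w = u$, uses $A_Y^* u = 0$, then transfers across the symmetric operator $(\nabla_X^v S)_Y$ and kills the result with $A_X^* v \perp u$; $(iii)$ pairs $(i)$ with $u$ and uses $\ker A_X^* \cap \ker A_Y^*$; $(iv)$ plugs in $w = u$ and uses $A_Y^* u = 0$, full stop.

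Your proposal never states this $(\nabla^v_w A)_X Y$ identity, and this is a genuine gap rather than a stylistic difference. Without it you are forced into the "bookkeeping" you flag as the main obstacle in $(iv)$, where you invoke a Codazzi/curvature-vanishing mechanism that is both unnecessary and not actually worked out; the paper's derivation of $(iv)$ is a one-line plug-in, with no curvature terms to track. Moreover, your discussion of $(ii)$ first suggests deriving it from $(iii)$ via symmetry of $u \mapsto \langle \nabla_u(A_X Y), \cdot\rangle$ and then correctly notes this would be circular (that symmetry is exactly what Proposition~3.3 is going to extract from the lemma) — but you do not replace it with a completed argument, only with "the same dualization as $(iii)$", which is not spelled out. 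The proposal also wobbles on conventions: you write $A_{\nabla_v X} Y$ before noticing $A$ is only defined on horizontal arguments, and the sign conventions relating $S_X$, $A_X^*$, and $(\nabla_v X)^v$ are left unresolved. In short, the approach could in principle be made to work (one can rederive the O'Neill formula from scratch), but as written, the central identity is missing, two of the four parts are proposed via routes that are either circular or overcomplicated, and no part is actually verified.
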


\begin{proof}
	O'Neill's equation (see, e.g., \cite[page 43]{gw}) gives
	\begin{equation}\label{eq:ONeill0}
		(\nabla^v_wA)_XY = -A_YA_X^*w-(\nabla_X^vS)_Yw,
	\end{equation}
	for all $w\in TF$. 
	Therefore, Lemma \ref{lem:GW} gives 
	\begin{equation}\label{eq:ONeill2}
	A_YA_X^*v=-(\nabla_X^vS)_Yv
	\end{equation} 
	for all $v\in\cal I$. 
	Item $(i)$ now follows from a straightforward computation: 
	\begin{align}
\nonumber
		 \nabla_v(A_XY) &= (\nabla_v^vA)_XY+A_YA^*_Xv-A_XA_Y^*v \\
\label{eq:ONeill1} &= -(\nabla_X^vS)_Yv-A_XA_Y^*v\\
	\nonumber  \label{eq:ONeill} 	&= -(\nabla_X^vS)_Yv+(\nabla_Y^vS)_Xv,	
	\end{align}
	\noindent where equation \eqref{eq:ONeill1} is valid for all $w\in TF$ and the last equality follows from \eqref{eq:ONeill2}.

	For item $(ii)$, we get
	\beam 
	\begin{array}{lcl}
	\nonumber 
		\lr{\nabla_u(A_XY),v} &=& -\lr{A_XA_Y^*u+(\nabla^v_XS)_Yu,v} \\
		&=& -\lr{(\nabla^v_XS)_Yu,v} 
		= -\lr{u,(\nabla^v_XS)_Yv} \\
		&=& \lr{u,A_YA_X^*v}=0.	
	\end{array}
	\eeam
	The first and fourth equalities follow from equations \eqref{eq:ONeill1} and \eqref{eq:ONeill2}, respectively, and the last since $u\perp {im}(A_X)$. 
Item $(iii)$ follows from item $(i)$ and equation \eqref{eq:ONeill2}, since $u\in \cal I^\bot$.
	Item $(iv)$ follows from equation \eqref{eq:ONeill1} and since $u \in \cal I^\bot$.
\end{proof}

\begin{prop}
There is a function $f:F\to \bb R$ whose gradient is $A_XY$.
\end{prop}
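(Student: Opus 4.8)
The strategy is to show that the $1$-form $\omega$ on $F$ dual to $A_XY$, namely $\omega(\cdot)=\langle A_XY,\cdot\rangle$, is closed; since $F\cong\bb R^k$ is simply connected, $\omega=df$ for some $f\colon F\to\bb R$, whose gradient is then $A_XY$ by definition. To check closedness I would compute $d\omega(v,w)=\langle\nabla_v(A_XY),w\rangle-\langle\nabla_w(A_XY),v\rangle$ for all pairs of tangent vectors, using the orthogonal splitting $T_pF=\cal I\oplus\cal I^\bot$ with $\cal I=\mathrm{im}(A_X)+\mathrm{im}(A_Y)$ and $\cal I^\bot=\ker(A_X^*)\cap\ker(A_Y^*)$. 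It suffices to verify $d\omega=0$ on the three cases $(v,w)\in\cal I\times\cal I$, $(v,w)\in\cal I\times\cal I^\bot$, and $(v,w)\in\cal I^\bot\times\cal I^\bot$, since $d\omega$ is tensorial and bilinear.

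The second case is immediate: for $v\in\cal I$ and $u\in\cal I^\bot$, items $(ii)$ and $(iii)$ of Lemma \ref{lem1} give $\langle\nabla_u(A_XY),v\rangle=0$ and $\langle\nabla_v(A_XY),u\rangle=0$, so $d\omega(v,u)=0$. For the first case, $(v,v')\in\cal I\times\cal I$, item $(i)$ expresses $\nabla_v(A_XY)$ via the shape operator $S$, and I would need the symmetry $\langle(\nabla^v_YS)_Xv-(\nabla^v_XS)_Yv,\,v'\rangle=\langle(\nabla^v_YS)_Xv'-(\nabla^v_XS)_Yv',\,v\rangle$; this should follow from the symmetry of $S$ ($S$ is self-adjoint since $F$ is totally geodesic, so $S\equiv0$ actually — wait, here $S$ is the shape operator of the fibers, not of $F$) together with the Codazzi-type identity for $\nabla^vS$ restricted to $F$, or more directly from a second application of O'Neill's equation \eqref{eq:ONeill0} combined with Lemma \ref{lem:GW} to re-express both terms as $A$-tensor expressions that are manifestly symmetric after pairing. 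For the third case, $(u,u')\in\cal I^\bot\times\cal I^\bot$, item $(iv)$ gives $\langle\nabla_u(A_XY),u'\rangle=-\langle(\nabla^v_XS)_Yu,u'\rangle$, and I would need this to be symmetric in $u,u'$, i.e. $\langle(\nabla^v_XS)_Yu,u'\rangle=\langle(\nabla^v_XS)_Yu',u\rangle$, which is exactly the statement that the symmetric bilinear form $(u,u')\mapsto\langle(\nabla^v_XS)_Yu,u'\rangle$ is indeed symmetric — and since $S_Y$ is self-adjoint, this reduces to checking that covariant differentiation $\nabla^v_X$ preserves self-adjointness, which is automatic.

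The main obstacle I anticipate is the first case, $d\omega$ on $\cal I\times\cal I$: controlling $\nabla_v(A_XY)$ for $v\in\cal I$ requires the $S$-term from item $(i)$, and showing the required symmetry in $v,v'$ is not purely formal — it is precisely the point where \cite{gromoll2001}'s argument needed the fiber volume form and where the delicate interaction between the $A$-tensor and the shape operator $S$ of the fibers enters. I expect the resolution to come from differentiating O'Neill's equation once more and feeding in Lemma \ref{lem:GW} (which kills $(\nabla_vA)_XY$ for $v\in\cal I$) to reduce everything to expressions in $A$ and $A^*$ alone, at which point the antisymmetry of $A$ in its horizontal arguments and the symmetry established in Lemma \ref{lem1} should make the two terms of $d\omega(v,v')$ cancel. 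Once closedness is established on all three blocks, Poincaré's lemma on the affine space $F$ finishes the proof, and the constancy of $\|A_XY\|$ from O'Neill's equation $3\|A_XY\|^2=R_M(\pi_*X,\pi_*Y,\pi_*Y,\pi_*X)$ (already noted before the statement) then upgrades $f$'s gradient to a parallel field, as needed for Proposition \ref{prop:AXYparallel}.
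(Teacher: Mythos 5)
Your proposal follows the same route as the paper: dualize $A_XY$ to a $1$-form, check closedness on the splitting $T_pF=\cal I\oplus\cal I^\bot$ using Lemma~\ref{lem1}, and invoke the Poincar\'e lemma on $F\cong\bb R^k$. The mixed case and the $\cal I^\bot\times\cal I^\bot$ case you handle exactly as the paper does.

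Where you go astray is in flagging the $\cal I\times\cal I$ block as ``the main obstacle'' and reaching for Codazzi identities, a second pass through O'Neill, or the fiber volume form. None of that is needed, and in fact the volume-form input has already been spent: it is what underlies Lemma~\ref{lem:GW}, which in turn gave you item~$(i)$ of Lemma~\ref{lem1}. Once item~$(i)$ is in hand, the $\cal I\times\cal I$ case closes by exactly the same observation you used for $\cal I^\bot\times\cal I^\bot$: both $(\nabla^v_YS)_X$ and $(\nabla^v_XS)_Y$ are symmetric endomorphisms of $TF$ (covariant differentiation of the self-adjoint $S_X$, $S_Y$ preserves self-adjointness), so for $v,v'\in\cal I$
\[
\lr{(\nabla^v_YS)_Xv,v'}-\lr{(\nabla^v_YS)_Xv',v}=0,\qquad
\lr{(\nabla^v_XS)_Yv,v'}-\lr{(\nabla^v_XS)_Yv',v}=0,
\]
and subtracting gives $d\omega(v,v')=0$ directly. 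So your ``should follow from the symmetry of $S$'' instinct was the whole argument; the hedging toward heavier machinery, and the claim that this is where the delicate volume-form interaction reappears, is a misdiagnosis rather than a genuine gap in the route. (The parenthetical detour about $S\equiv0$ because $F$ is totally geodesic also deserves cleaning up: $S$ here is the shape operator of the fibers of $\pi$ as submanifolds of $\bb R^{n+k}$, and only the one fiber $F$ over the soul is totally geodesic, so $S$ and its covariant derivatives are certainly not identically zero.)
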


\begin{proof}
	Consider the $1$-form $\alpha: TF\to \bb R$, $\alpha(u)=\lr{A_XY,u}$. 
	Then $\alpha=df$ for some $f$ if and only if  
	\[d\alpha(u,v)=\lr{\nabla_u(A_XY),v}-\lr{\nabla_v(A_XY),u}=0 \]
	for all $u,v\in TF$. 
	But the latter holds by a straightforward computation by distinction of cases for $u, v \in TF=\cal I+\cal I^\bot$, using Lemma \ref{lem1} and by observing that $(\nabla_X^vS)_Y:TF\to TF$ is a symmetric operator since $S_Y$ is symmetric. 
\end{proof}

\bibliographystyle{alpha}
\bibliography{main}

\end{document}